\newcommand{\R}{\mathbb R}
\newcommand{\Sn}{\mathbb{S}^n}
\newcommand{\Snc}{\mathbb{S}^n_c}
\newcommand{\Sk}{\mathbb{S}^k}
\newcommand{\Sp}{\mathbb{S}}
\newcommand{\spam}{\mathrm{span}}
\newcommand{\pie}{\langle}
\newcommand{\pid}{\rangle}
\newcommand{\Cin}{\text{C}^\infty(M)}
\newcommand{\tr}{\mathrm{trace}\,}
\newcommand{\hs}{\mathrm{Hess}\,}
\newcommand{\ddt}{\frac{\partial}{\partial t}}
\newcommand{\grad}{\mathrm{grad}\,}
\newcommand{\Div}{\mathrm{div}\,}
\newcommand{\til}{\widetilde}
\newtheorem{theorem}{Theorem}
\newtheorem{prop}[theorem]{Proposition}
\newtheorem{rem}[theorem]{Remark}
\theoremstyle{definition}\newtheorem{example}[theorem]{Example}
\title{Minimal immersions of Riemannian manifolds in
products of space forms}
\author{Fernando Manfio and Feliciano Vit\'orio}
\date{}
\begin{document}

\maketitle

\begin{abstract}
In this note, we give natural extensions to cylinders and tori of a classical result due to T. Takahashi \cite{takahashi} about minimal immersions into spheres. More precisely, we deal with Euclidean isometric immersions whose projections in $\mathbb{R}^N$ satisfy a spectral condition of their Laplacian.
\end{abstract}

\noindent {\bf MSC 2000:} 53C42, 53A10.\vspace{2ex}

\noindent {\bf Key words:} {\small {\em Minimal immersions, 
isometric immersions, Riemannian product of space forms.}}

\section{Introduction}

An isometric immersion $f:M^m\to N^n$ of a Riemannian manifold
$M$ in another Riemannian manifold $N$ is said to be {\em minimal}
if its mean curvature vector field $H$ vanishes. The study of minimal
surfaces is one of the oldest subjects in differential geometry, having
its origin with the work of Euler and Lagrange. In the last century, 
a series of works have been developed 
in the study of properties of minimal immersions, whose ambient 
space has constant sectional curvature. In particular, minimal
immersions in the sphere $\Sn$ play a important role in the theory,
as for example the famous paper of J. Simons \cite{simons}.

Let $f:M^m\to\R^n$ be an isometric immersion of a $m$-dimensional
manifold $M$ into the Euclidean space $\R^n$. Associated with
the induced metric on $M$, it is defined the Laplace operator
$\Delta$ acting on $\mathrm C^\infty(M)$. This Laplacian can
be extended in a natural way to the immersion $f$. A well-known
result by J. Eells and J. H. Sampson \cite{eells} asserts that the 
immersion $f$ is minimal if and only if $\Delta f=0$. The following
result, due to T. Takahashi \cite{takahashi}, states that the 
immersion $f$ {\em realizes} a minimal immersion in a sphere if and
only if its coordinate functions are eigenfunctions of the Laplace
operator with the same nonzero eigenvalue. 

\begin{theorem}\label{teo:takahashi}
Let $F:M^m\to\R^{n+1}$ be an isometric immersion such that
\[
\Delta F=-mcF
\]
for some constant $c\neq0$. Then $c>0$ and there exists a minimal 
isometric immersion $f:M^m\to\Snc$ such that $F=i\circ f$.
\end{theorem}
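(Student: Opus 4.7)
The plan is to use the vector form of the Eells--Sampson identity (Beltrami's formula), which for any isometric immersion $F\colon M^m\to\R^{n+1}$ gives $\Delta F=mH$, where $H$ is the mean curvature vector of $F$ in $\R^{n+1}$. Combined with the hypothesis $\Delta F=-mcF$, this immediately yields $H=-cF$. Since $H$ lies in the normal bundle of $F$ and $c\neq 0$, the position vector $F(p)$ is itself perpendicular to $dF_p(T_pM)$ at every $p$, so for any tangent $X$,
\[
X(|F|^2)=2\langle dF(X),F\rangle=0.
\]
Hence $|F|^2$ is constant on the (assumed connected) manifold $M$; write $|F|^2\equiv R^2$.

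The next step is to pin down $R$ and the sign of $c$ by computing $\Delta|F|^2$ in two ways. On one hand it vanishes, since $|F|^2$ is constant. On the other hand, writing $F=(F_1,\ldots,F_{n+1})$ and applying the product rule componentwise,
\[
\Delta|F|^2=2\langle F,\Delta F\rangle+2\sum_i|\nabla F_i|^2=-2mcR^2+2m,
\]
where the last equality uses the hypothesis together with the identity $\sum_i|\nabla F_i|^2=\sum_j|dF(e_j)|^2=m$, valid in any local orthonormal frame $\{e_j\}$ of $TM$ because $F$ is isometric. Comparison gives $R^2=1/c$, so $c>0$ and the image of $F$ lies in the sphere $\Snc\subset\R^{n+1}$ of radius $1/\sqrt c$ centered at the origin. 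In particular $F=i\circ f$ for an isometric immersion $f\colon M\to\Snc$.

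Finally, to check that $f$ is minimal I use the Gauss formula for the composition $F=i\circ f$, which after tracing reads
\[
H_F=i_*H_f+H_i,
\]
where $H_i$ denotes the contribution to the mean curvature coming from the totally umbilical inclusion $i\colon\Snc\hookrightarrow\R^{n+1}$. Since the second fundamental form of $i$ satisfies $II^i(u,v)=-c\langle u,v\rangle F$, tracing along $f(M)$ gives $H_i=-cF$. Combined with $H_F=-cF$ from the first step, this forces $i_*H_f=0$, and hence $H_f=0$ since $i$ is an immersion.

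The main obstacle is essentially just bookkeeping: keeping the sign conventions for $\Delta$, the normalization of $H$, and the sign of the umbilical second fundamental form of $\Snc\subset\R^{n+1}$ consistent. Once these are fixed, everything reduces to the two short computations above.
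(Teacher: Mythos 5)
Your argument is correct and is exactly the standard Takahashi proof: the paper does not actually reprove Theorem~\ref{teo:takahashi} (it cites \cite{takahashi}), but your steps --- $\Delta F=mH$ forces $F$ to be normal to $M$, hence $\|F\|^2$ is constant; the identity $\Delta\|F\|^2=2(\pie\Delta F,F\pid+m)=0$ then gives $c=1/R^2>0$; and tracing the umbilical second fundamental form of $i:\Snc\to\R^{n+1}$ cancels the $-cF$ term and leaves $H_f=0$ --- are precisely the template the paper follows in its proofs of Theorems~\ref{teo:takacilindro} and~\ref{teo:takatorus}. The only (harmless) implicit hypothesis is connectedness of $M$, needed to conclude that $\|F\|^2$ is globally constant.
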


%Takahashi's theorem allows us, in a simple way, an important 
%example of minimal immersion in the sphere, the Veronese's surface. 

O. Garay generalized the Theorem \ref{teo:takahashi} for the
hypersurfaces $f:M^n\to\R^{n+1}$ satisfying $\Delta f=Af$, 
where $A$ is a constant $(n+1)\times(n+1)$ diagonal matrix.
He proved in \cite{garay} that such a hypersurface is either 
minimal or an open subset of a sphere or of a cylinder. In this
direction, J. Park \cite{park} classified the hypersurfaces in a
space form or in Lorentzian space whose immersion $f$
satisfies $\Delta f=Af+B$, where $A$ is a constant square
matrix and $B$ is a constant vector. Similar results were
obtained in \cite{alias}, where the authors study and classify
pseudo-Riemannian hypersurfaces in pseudo-Riemannian
space forms which satisfy the condition $\Delta f=Af+B$,
where $A$ is an endomorphism and $B$ is a constant vector.
In a somewhat different direction, B. Chen in a series of papers
discuss the problem of determining the geometrical structure 
of a submanifold knowing some simple analytic information
(see for example \cite{chen1}, \cite{chen2}).

In this work we shall deal with an isometric immersion
$f:M^m\to\R^N$ of a Riemannian manifold $M^m$ into the
Euclidean space $R^N$. If the submanifold $f(M)$ is contained 
in a cylinder $\Sn\times\R^k\subset\R^N$ or in a torus 
$\Sn\times\Sk\subset\R^N$, we shall call that the {\em immersion $f$
realizes an immersion in a cylinder} or in a {\em torus}, respectively.
Motivated by recent works on the submanifold theory in the
product of space forms \cite{tojvitorio}, we obtain theorems
that give us necessary and sufficient conditions for an isometric
immersion $f:M^m\to\R^N$ realize a minimal immersion in a
cylinder or in a torus (cf. Theorems \ref{teo:takacilindro} and 
\ref{teo:takatorus}).

\section{Preliminaries}

Let $M^m$ be a Riemannian manifold and $h\in\Cin$. The
{\em hessian} of $h$ is the symmetric section of 
$\text{Lin}(TM\times TM)$ defined by
\[
\hs h(X,Y)=XY(h)-\nabla_XY(h)
\]
for all $X,Y\in TM$. Equivalently,
\[
\hs h(X,Y)=\pie\nabla_X\grad h,Y\pid
\]
where $X,Y\in TM$ and $\grad h$ is the gradient of $h$. The {\em Laplacian} 
$\Delta h$ of a function $h\in\Cin$ at the point $p\in M$ is defined as
\[
\Delta h(p)=\tr\hs h(p)=\Div\grad h(p).
\]

Consider now an isometric immersion $f:M^m\to\R^n$. For a fixed 
$v\in\R^n$, let $h\in\Cin$ be the height function with respect to the
hyperplane normal to $v$, given by $h(p)=\pie f(p),v\pid$.
Then
\begin{eqnarray}\label{eq:hessalpha}
\hs h(X,Y)=\pie\alpha_f(X,Y),v\pid
\end{eqnarray}
for any $X,Y\in TM$. For an isometric immersion $f:M^n\to\R^n$,
by $\Delta f(p)$ at the point $p\in M$ we mean the vector 
\[
\Delta f(p)=(\Delta f_1(p),\ldots,\Delta f_n(p)),
\]
where $f=(f_1,\ldots,f_n)$. Taking traces in \eqref{eq:hessalpha} 
we obtain
\begin{eqnarray}\label{eq:laplacianH}
\Delta f(p)=mH(p),
\end{eqnarray}
where $H(p)$ is the mean curvature vector of $f$ at $p\in M$.

\section{Minimal submanifolds in $\Snc\times\R^k$}

Let $\Snc$ denote the sphere with constant sectional curvature
$c>0$ and dimension $n$. We use the fact that $\Snc$ admits a
canonical isometric embedding in $\R^{n+1}$ as
\[
\Snc=\left\{X\in\R^{n+1}:\pie X,X\pid=1/c\right\}.
\]
Thus, $\Snc\times\R^k$ admits a canonical isometric embedding 
\[
i:\Snc\times\R^k\to\R^{n+k+1}.
\]
Denote by $\pi:\R^{n+k+1}\to\R^{n+1}$ the canonical projection.
Then, the normal space of $i$ at each point $z\in\Snc\times\R^k$ is
spanned by $N(z)=c(\pi\circ i)(z))$, and the second fundamental form 
of $i$ is given by
\[
\alpha_i(X,Y)=-c\pie\pi X,Y\pid\pi\circ i.
\]
If we consider a parallel orthonormal frame $E_1,\ldots,E_{n+k+1}$
of $\R^{n+k+1}$ such that
\begin{eqnarray}\label{eq:frameSncRk}
\R^k=\spam\{E_{n+2},\ldots,E_{n+k+1}\},
\end{eqnarray}
we can express the second fundamental form $\alpha_i$ as
\begin{eqnarray}\label{eq:secondffi}
\alpha_i(X,Y)=-c\left(\pie X,Y\pid-\sum_{i=n+2}^{n+k+1}
\pie X,E_i\pid\pie Y,E_i\pid\right)\pi\circ i.
\end{eqnarray}

The following result shows that minimal immersions of a
$m$-dimensional Riemannian manifold into the cylinder 
$\Snc\times\R^k$ are precisely those immersions whose
$n+1$ first coordinate functions in $\R^{n+k+1}$ are
eigenfunctions of the Laplace operator in the induced
metric.

\begin{prop}\label{prop:SncRk}
Let $f:M^m\to\Snc\times\R^k$ be an isometric immersion and
\linebreak
set $F=i\circ f$, where $i:\Snc\times\R^k\to\R^{n+k+1}$ is the
canonical inclusion. Let $E_1,\ldots,E_{n+k+1}$ be a parallel
orthonormal frame of $\R^{n+k+1}$ as in \eqref{eq:frameSncRk}.
Then $f$ is a minimal immersion if and only if
\begin{eqnarray}\label{eq:Lapcilindro}
\Delta F=-c\left(m-\sum_{j=n+2}^{n+k+1}\|T_j\|^2\right)\pi\circ F,
\end{eqnarray}
where $T_j$ denotes the orthogonal projection of $E_j$ onto
$TM$.
\end{prop}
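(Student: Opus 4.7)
The plan is to apply \eqref{eq:laplacianH} to the composed immersion $F=i\circ f\colon M\to\R^{n+k+1}$, obtaining $\Delta F = mH_F$, and then relate $H_F$ to $H_f$ via the standard composition formula for second fundamental forms,
\[
\alpha_F(X,Y)=di\cdot\alpha_f(X,Y)+\alpha_i(dfX,dfY).
\]
Tracing this identity over a local orthonormal frame $\{e_1,\ldots,e_m\}$ of $TM$ gives $mH_F = mH_f + \sum_{l=1}^m \alpha_i(dfe_l,dfe_l)$, where we silently identify $H_f$ with $di(H_f)$.

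Next I would substitute the expression \eqref{eq:secondffi} for $\alpha_i$ into this trace. Using that $df$ is isometric, so that $\sum_l |dfe_l|^2 = m$, and that $\sum_l \langle dfe_l, E_j\rangle^2 = \|T_j\|^2$ by definition of the tangential projection $T_j$ of $E_j$ onto $TM$, the sum collapses to
\[
\sum_{l=1}^m \alpha_i(dfe_l,dfe_l) = -c\Bigl(m-\sum_{j=n+2}^{n+k+1}\|T_j\|^2\Bigr)\pi\circ F.
\]
Combining this with $\Delta F = mH_F$ yields
\[
\Delta F = mH_f - c\Bigl(m-\sum_{j=n+2}^{n+k+1}\|T_j\|^2\Bigr)\pi\circ F.
\]

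To conclude the equivalence, I would observe that the two summands on the right are mutually orthogonal: $di(H_f)$ is tangent to $\Snc\times\R^k$, while $\pi\circ F$ spans the normal bundle of $i$ (as noted in the preamble, $N(z)=c(\pi\circ i)(z)$). Therefore $\Delta F$ equals the right-hand side of \eqref{eq:Lapcilindro} if and only if $mH_f=0$, i.e., $f$ is minimal, proving both directions.

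\textbf{Main obstacle.} The proof is essentially a bookkeeping exercise, and the only delicate point is being careful with identifications: $\alpha_f$ takes values in the normal bundle of $f$ inside $T(\Snc\times\R^k)$, whereas $\alpha_i$ and $\alpha_F$ live in $\R^{n+k+1}$, so one must consistently push forward via $di$. Once this is set up, the orthogonality argument that separates the cylinder-tangent component from the $\pi\circ F$ component is what makes the "if" direction (not merely the "only if") work.
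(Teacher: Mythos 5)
Your proposal is correct and follows essentially the same route as the paper: compose the second fundamental forms, substitute \eqref{eq:secondffi}, take traces using \eqref{eq:laplacianH}, and separate the tangential component $i_\ast H^f$ from the normal direction $\pi\circ F$. The only difference is that you spell out the final orthogonality argument that the paper compresses into ``the conclusion follows,'' which is a welcome clarification rather than a deviation.
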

\begin{proof}
The second fundamental forms of $f$ and $F$ are related by
\[
\alpha_F(X,Y)=i_\ast\alpha_f(X,Y)+\alpha_i(f_\ast X,f_\ast Y)
\]
for all $X,Y\in TM$. From \eqref{eq:secondffi} we get that
\[
\alpha_F(X,Y)=i_\ast\alpha_f(X,Y)-c\left(\pie X,Y\pid-
\sum_{j=n+2}^{n+k+1}
\pie X,T_j\pid\pie Y,T_j\pid\right)\pi\circ F,
\]
where $T_j$ denotes the orthogonal projection of $E_i$ onto
$TM$. Taking traces and using \eqref{eq:laplacianH} yields 
\[
\Delta F=mi_\ast H^f-c\left(m-\sum_{j=n+2}^{n+k+1}\|T_j\|^2\right)
\pi\circ F,
\]
and the conclusion follows.
\end{proof}

\begin{rem}
In case $f:M^m\to\Snc\times\R$, a tangent vector field $T$ on $M$
and a normal vector field $\eta$ along $f$ are defined by
\[
\frac{\partial}{\partial t}=f_\ast T+\eta,
\]
where $\frac{\partial}{\partial t}$ is an unit vector field tangent to 
$\R$. In this case, $f$ is a minimal immersion if and only if
\[
\Delta F=-c(m-\|T\|^2)\pi\circ F.
\]
\end{rem}

The next result states that any isometric immersion of a
Riemannian manifold $M^m$ into Euclidean space $\R^{n+k+1}$,
whose Laplacian satisfies a condition as in \eqref{eq:Lapcilindro},
arises for a minimal isometric immersion of $M$ into some cylinder
$\Sn_c\times\R^k$.

\begin{theorem}\label{teo:takacilindro}
Let $F:M^m\to\R^{n+k+1}$ be an isometric immersion and let
$E_1,\ldots,E_{n+k+1}$ be a parallel orthonormal frame in 
$\R^{n+k+1}$ such that
\[
\Delta F=-c\left(m-\sum_{j=n+2}^{n+k+1}\|T_j\|^2\right)
\left(F-\sum_{j=n+2}^{n+k+1}\pie F,E_j\pid E_j\right),
\]
for some constant $c\neq0$, where $T_j$ denotes the orthogonal
projection of $E_j$ onto the tangent bundle $TM$. Then $c>0$ and 
there exists a minimal isometric immersion $f:M^m\to\Snc\times\R^k$ 
such that $F=i\circ f$.
\end{theorem}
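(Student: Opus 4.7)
The approach adapts Takahashi's original argument to the cylinder setting. Write
\[
F_0 := \pi\circ F = F - \sum_{j=n+2}^{n+k+1}\pie F, E_j\pid E_j,
\qquad
\lambda := m - \sum_{j=n+2}^{n+k+1}\|T_j\|^2,
\]
so the hypothesis reads $\Delta F = -c\lambda F_0$. I aim to prove that $\|F_0\|^2$ is a positive constant equal to $1/c$; this will place $F_0$ on the sphere $\Snc\subset\R^{n+1}$, hence $F$ will factor as $i\circ f$ for some isometric immersion $f:M^m\to\Snc\times\R^k$, and minimality of $f$ will follow at once from Proposition \ref{prop:SncRk}.

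The first step is to observe, via \eqref{eq:laplacianH}, that $\Delta F = mH^F$ is normal to $F_*TM$, so the hypothesis forces $F_0\perp F_*TM$ at every point where $\lambda\neq 0$. Next, to compute the derivative of $\|F_0\|^2$, parallelism of the $E_j$ and $X\pie F,E_j\pid = \pie X,T_j\pid$ give $\nabla_X F_0 = F_*X - \sum_j\pie X,T_j\pid E_j$, and since $\pie E_j,F_0\pid = \pie E_j,F\pid - \pie F, E_j\pid = 0$ by construction, we obtain
\[
\tfrac12 X\|F_0\|^2 = \pie F_*X, F_0\pid,
\]
a formula valid on all of $M$. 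By the previous observation this vanishes on $U := \{\lambda\neq 0\}$; assuming $U$ is dense (the nondegenerate case), continuity extends this to $M$, and connectedness yields $\|F_0\|^2\equiv 1/c'$ for some $c' > 0$.

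With $\|F_0\|^2$ constant, $F_0$ takes values in $\mathbb{S}^n_{c'}$ and $F = i\circ f$ for some isometric immersion $f:M^m\to\mathbb{S}^n_{c'}\times\R^k$. The computation in the proof of Proposition \ref{prop:SncRk} (without assuming $f$ minimal) yields $\Delta F = mi_\ast H^f - c'\lambda F_0$; subtracting the hypothesis gives
\[
m\, i_\ast H^f = (c'-c)\lambda F_0.
\]
The left-hand side is tangent to $\mathbb{S}^n_{c'}\times\R^k$ in $\R^{n+k+1}$, while the right-hand side is normal to it (spanning the radial direction of the $\mathbb{S}^n_{c'}$ factor), so both must vanish. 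Nontriviality of $\lambda F_0$ on $U$ then forces $c = c' > 0$, while $i_\ast H^f = 0$ gives $H^f\equiv 0$, completing the proof.

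The main obstacle is ruling out the degenerate scenario in which $\lambda$ vanishes on an open subset $V$: there $\Delta F\equiv 0$ and $\sum_j\|T_j\|^2 = m$ pointwise forces the tangential parts of the $E_j$'s to span $TM$, which is incompatible with a genuine cylindrical factorization on the complement. In the generic (nondegenerate) setting the argument above delivers the full statement.
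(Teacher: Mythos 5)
Your argument is correct in the nondegenerate case and its first half coincides with the paper's: both show that $N=F_0$ is normal to $F_*TM$ wherever the coefficient is nonzero, use parallelism of the $E_j$ and $\langle F_0,E_j\rangle=0$ to get $X\|F_0\|^2=2\langle F_*X,F_0\rangle=0$, and conclude $\|F_0\|^2$ is constant. Where you genuinely diverge is in pinning down $c$: the paper computes $\Delta\|F\|^2$ twice, once via a geodesic frame to get $\Delta\|F\|^2=2\sum_j\|T_j\|^2$ and once via $\Delta\|F\|^2=2(\langle\Delta F,F\rangle+m)$, and equates to obtain $cr^2=1$; you instead factor $F$ through $\mathbb{S}^n_{c'}\times\R^k$ with $c'=1/\|F_0\|^2$, reuse the second-fundamental-form computation from Proposition \ref{prop:SncRk} to write $\Delta F=mi_*H^f-c'\lambda F_0$, and read off $H^f=0$ and $c=c'$ from the tangent/normal splitting relative to the cylinder. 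Your route is arguably cleaner — it delivers minimality and the identification of $c$ in one stroke and avoids the geodesic-frame computation — at the cost of re-entering the proof (not just the statement) of the proposition. Two small points deserve a sentence each in a final write-up: (i) you should note why the constant $\|F_0\|^2$ is strictly positive in the nondegenerate case ($F_0\equiv0$ would force $TM\subset\R^k$ and hence $\lambda\equiv0$, contradicting $U\neq\emptyset$); (ii) the degenerate case $\lambda\equiv0$ on an open set is a real issue — e.g.\ the inclusion $\R^k\hookrightarrow\{0\}\times\R^k$ satisfies the hypothesis vacuously for every $c$ yet does not factor through any $\Snc\times\R^k$ — but the paper's own proof silently makes the same nondegeneracy assumption (it divides by $m-\sum_j\|T_j\|^2$ to conclude $c=1/r^2$), so flagging it, as you do, is if anything more honest than the original.
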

\begin{proof}
Since $\Delta F=mH$ by \eqref{eq:laplacianH}, the assumption 
implies that the vector field 
\[
N=F-\sum_{j=n+2}^{n+k+1}\pie F,E_j\pid E_j
\]
is normal to $F$. On the other hand,
\begin{eqnarray*}
\pie N,E_j\pid &=& \left\pie F-\sum_{l=n+2}^{n+k+1}
\pie F,E_l\pid E_l,E_j\right\pid \\
&=& \pie F,E_j\pid - \pie F,E_j\pid=0
\end{eqnarray*}
for all $n+2\leq j\leq n+k+1$. Hence, for any $X\in TM$ we have
\[
X\pie N,N\pid=2\left\pie F_\ast X-\sum_{j=n+2}^{n+k+1}
\pie F_\ast X,E_j\pid E_j,N\right\pid=0,
\]
and it follows that $\pie N,N\pid=r^2$ for some constant $r$.
Now we claim that
\begin{eqnarray}\label{eq:LaplaceF}
\Delta\|F\|^2=2\sum_{j=n+2}^{n+k+1}\|T_j\|^2.
\end{eqnarray}
To see this, fix a point $p\in M$ and consider a local geodesic
frame $\{X_1,\ldots,X_m\}$ in $p$. Then
\[
\grad\|F\|^2=\sum_{\alpha=1}^mX_\alpha(\|F\|^2)X_\alpha=
2\sum_{\alpha=1}^m\pie F_\ast X_\alpha,F\pid X_\alpha=2F^T.
\]
Since $N$ is normal to $F$, we have
\[
F^T=\sum_{j=n+2}^{n+k+1}\pie F,E_j\pid T_j=
\sum_{j=n+2}^{n+k+1}\sum_{\alpha=1}^m
\pie F,E_j\pid\pie E_j,X_\alpha\pid X_\alpha,
\]
and it follows that
\[
\grad\|F\|^2=2\sum_{j=n+2}^{n+k+1}\sum_{\alpha=1}^m
\pie F,E_j\pid\pie E_j,X_\alpha\pid X_\alpha.
\]
Therefore,
\begin{eqnarray*}
\Delta\|F\|^2 &=& \sum_{\beta=1}^m\left\pie\nabla_{X_\beta}\grad\|F\|^2,
X_\beta\right\pid \\
&=& 2\sum_{\alpha,\beta=1}^m\sum_{j=n+2}^{n+k+1}\left\pie\nabla_{X_\beta}
\pie F,E_j\pid\pie E_j,X_\alpha\pid X_\alpha,X_\beta\right\pid \\
&=& 2\sum_{\alpha,\beta=1}^m\sum_{j=n+2}^{n+k+1}\pie F_\ast X_\beta,E_j\pid
\pie E_j,X_\alpha\pid\pie X_\alpha,X_\beta\pid \\
&=& 2\sum_{\alpha=1}^m\sum_{j=n+2}^{n+k+1}\pie X_\alpha,T_j\pid^2 =
2\sum_{j=n+2}^{n+k+1}\|T_j\|^2,
\end{eqnarray*}
and this proves our claim. Finally, using the fact that
\[
\Delta\|F\|^2=2(\pie\Delta F,F\pid+m),
\]
we get that 
\begin{eqnarray*}
\sum_{j=n+2}^{n+k+1}\|T_j\|^2 &=& \pie\Delta F,F\pid+m=
\left\pie-c\left(m-\sum_{j=n+2}^{n+k+1}\|T_j\|^2\right)N,N\right\pid+m \\
&=& -\left(m-\sum_{j=n+2}^{n+k+1}\|T_j\|^2\right)cr^2+m,
\end{eqnarray*}
and the equality above implies that $c=1/r^2$. We conclude that there
exists an isometric immersion $f:M^m\to\Snc\times\R^k$ such that
$F=i\circ f$, and minimality of $f$ follows from Proposition
\ref{prop:SncRk}.
\end{proof}

As an application of Theorem \ref{teo:takacilindro} we will
construct an example into $\Sp^{2n-1}\times\R$.

\begin{example}\label{exa:clifford}
For each real number $a$, with $\sqrt{n-1}<a\leq\sqrt n$,
we claim that there exists a real number $b$ such that the 
immersion $f:\R^n\to\R^{2n+1}$, given by
\[
f(x_1,\ldots,x_n)=\frac{1}{\sqrt{n}}\left( e^{i ax_1},
\ldots,e^{iax_n},b\sum_{j=1}^nx_j\right),
\]
is a minimal immersion into $\Sp^{2n-1}\times\R$. In fact, 
we need check the hypothesis of the Theorem
\ref{teo:takacilindro} for a suitable choice of $(a,b)\in\R^2$.
First observe that 
\[
\Delta f = - a^2\left(f-\left\pie f,\ddt\right\pid\ddt\right),
\]
where $\frac{\partial}{\partial t}$ denotes a unit vector field
tangent to the factor $\R$. Now, if we denote by $T$ the 
orthogonal projection of $\frac{\partial}{\partial t}$ onto $f$,
a direct computation give us 
\begin{eqnarray}\label{eq:example1}
\|T\|^2= \frac{nb^2}{a^2+nb^2}.
\end{eqnarray}
On the other hand, follows from \eqref{eq:LaplaceF} that
\begin{eqnarray}\label{eq:example2}
\|T\|^2=\pie\Delta f,f\pid+n=n-a^2.
\end{eqnarray}
It follows from \eqref{eq:example1} and \eqref{eq:example2} that
\[
b^2=\frac{a^2(n-a^2)}{n(a^2-n+1)}.
\]
\end{example}

\section{Minimal submanifolds in the product $\Sn\times\Sk$}

Let $\Sn$ and $\Sk$ denote the spheres of dimension $n$ 
and $k$, respectively. Using the fact that the spheres
admit a canonical isometric embedding $\Sn\subset\R^{n+1}$
and $\Sk\subset\R^{k+1}$, the product $\Sn\times\Sk$ admits
a canonical isometric embedding 
\begin{eqnarray}\label{eq:inclusioncan}
i:\Sn\times\Sk\to\R^{n+k+2}.
\end{eqnarray}
Denote by $\pi_1:\R^{n+k+2}\to\R^{n+1}$ and 
$\pi_2:\R^{n+k+2}\to\R^{k+1}$ the canonical projections.
Then, the normal space of $i$ at each point 
$z\in\Sn\times\Sk$ is spanned by $N_1(z)=\pi_1(i(z))$
and $N_2(z)=\pi_2(i(z))$, and the second fundamental
form of $i$ is given by
\[
\alpha_i(X,Y)=-\pie\pi_1X,Y\pid N_1-\pie\pi_2X,Y\pid N_2
\]
for all $X,Y\in T_z\Sn\times\Sk$.   

\vspace{.2cm}

Now, let $f:M^m\to\Sn\times\Sk$ be an isometric immersion
of a Riemannian manifold. Then, writing $F=i\circ f$, the unit
vector fields $N_1=\pi_1\circ F$ and $N_2=\pi_2\circ F$ are
normal to $F$. Consider a parallel orthonormal frame
$E_1,\ldots,E_{n+k+2}$ of $\R^{n+k+2}$ such that
\begin{eqnarray}\label{eq:fieldnormalF}
\R^{n+1}=\spam\{E_1,\ldots,E_{n+1}\}\quad\text{and}\quad
\R^{k+1}=\spam\{E_{n+2},\ldots,E_{n+k+2}\}.
\end{eqnarray}
In terms of this frame, we can express the vector fields $N_1$
and $N_2$ as
\begin{eqnarray}\label{eq:fieldsN1N2}
N_1=F-\sum_{j=n+2}^{n+k+2}\pie F,E_j\pid E_j
\quad\text{and}\quad
N_2=F-\sum_{l=1}^{n+1}\pie F,E_l\pid E_l.
\end{eqnarray}

\begin{prop}\label{prop:SnSk}
Let $f:M^m\to\Sn\times\Sk$ be an isometric immersion and
set $F=i\circ f$, where $i:\Sn\times\Sk\to\R^{n+k+2}$ is the
canonical inclusion. Let $E_1,\ldots,E_{n+k+2}$ be a parallel
orthonormal frame of $\R^{n+k+2}$ as in \eqref{eq:fieldnormalF}.
Then $f$ is a minimal immersion if and only if
\begin{eqnarray}\label{eq:Laplacegeral}
\Delta F=-\left(m-\sum_{j=n+2}^{n+k+2}\|T_j\|^2\right)N_1
-\left(m-\sum_{l=1}^{n+1}\|T_l\|^2\right)N_2,
\end{eqnarray}
where $T_j$ denotes the orthogonal projection of $E_j$ onto
$TM$.
\end{prop}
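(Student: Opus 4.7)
The plan is to follow the structure of the proof of Proposition \ref{prop:SncRk}, adapted to the ambient $\Sn\times\Sk$ with its two linearly independent normals. I would start from the standard decomposition
\[
\alpha_F(X,Y)=i_\ast\alpha_f(X,Y)+\alpha_i(f_\ast X,f_\ast Y),
\]
and plug in the given expression for $\alpha_i$, obtaining
\[
\alpha_F(X,Y)=i_\ast\alpha_f(X,Y)-\pie\pi_1 X,Y\pid N_1-\pie\pi_2 X,Y\pid N_2.
\]

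Next, I would expand the two projections in the parallel frame \eqref{eq:fieldnormalF} as $\pi_1 X=\sum_{l=1}^{n+1}\pie X,E_l\pid E_l$ and $\pi_2 X=\sum_{j=n+2}^{n+k+2}\pie X,E_j\pid E_j$. Tracing over a local orthonormal frame $\{X_\alpha\}$ of $TM$ and invoking \eqref{eq:laplacianH} would yield
\[
\Delta F=m\,i_\ast H^f-\sum_{l=1}^{n+1}\|T_l\|^2\,N_1-\sum_{j=n+2}^{n+k+2}\|T_j\|^2\,N_2,
\]
exactly as in the cylinder case, since $\sum_\alpha\pie\pi_1 X_\alpha,X_\alpha\pid=\sum_{l=1}^{n+1}\sum_\alpha\pie X_\alpha,E_l\pid^2=\sum_{l=1}^{n+1}\|T_l\|^2$, and analogously for $\pi_2$.

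The step that converts this into the form \eqref{eq:Laplacegeral} is the decomposition identity
\[
\sum_{l=1}^{n+1}\|T_l\|^2+\sum_{j=n+2}^{n+k+2}\|T_j\|^2=m,
\]
which follows by summing the orthogonal split $\|X_\alpha\|^2=\|\pi_1 X_\alpha\|^2+\|\pi_2 X_\alpha\|^2$ over $\alpha$. Using it to rewrite each coefficient in its dual form transforms the previous display into $m\,i_\ast H^f$ plus the right-hand side of \eqref{eq:Laplacegeral}. Since $i_\ast H^f$ is tangent to $\Sn\times\Sk$ and therefore orthogonal to both $N_1$ and $N_2$, the vanishing of $H^f$ is equivalent to \eqref{eq:Laplacegeral}, giving both implications. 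I do not expect any serious obstacle; the only point requiring care is the bookkeeping needed to replace each sum $\sum_l\|T_l\|^2$ by its complement $m-\sum_j\|T_j\|^2$ so that the result matches the stated formula literally, and to recall that $N_1$ and $N_2$ are orthogonal so the two normal components can be read off independently.
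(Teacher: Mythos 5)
Your proposal is correct and follows essentially the same route as the paper: the decomposition $\alpha_F=i_\ast\alpha_f+\alpha_i(f_\ast\cdot,f_\ast\cdot)$, tracing against an orthonormal frame, and the identity $\sum_{l=1}^{n+1}\|T_l\|^2+\sum_{j=n+2}^{n+k+2}\|T_j\|^2=m$ to match the stated coefficients. The only (cosmetic) difference is that you expand $\pi_1X,\pi_2X$ directly in the frame of each factor, whereas the paper writes them as $X$ minus the complementary projection, so the coefficients $m-\sum\|T_j\|^2$ appear immediately rather than after the final substitution.
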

\begin{proof}
The second fundamental forms of $f$ and $F$ are related by
\[
\alpha_F(X,Y)=i_\ast\alpha_f(X,Y)+\alpha_i(f_\ast X,f_\ast Y)
\]
for all $X,Y\in TM$. Let $E_1,\ldots,E_{n+k+2}$ be a parallel
orthonormal frame of $\R^{n+k+2}$ as in \eqref{eq:fieldnormalF}.
Given $X\in TM$, we can write
\[
\pi_1X=X-\sum_{j=n+1}^{n+k+2}\pie X,T_j\pid E_j
\quad\text{and}\quad
\pi_2X=X-\sum_{l=1}^{n+1}\pie X,T_l\pid E_l,
\]
and so, we have
\[
\pie\pi_1X,Y\pid=\pie X,Y\pid-\sum_{j=n+1}^{n+k+2}
\pie X,T_j\pid\pie Y,T_j\pid
\]
and
\[
\pie\pi_2X,Y\pid=\pie X,Y\pid-\sum_{l=1}^{n+1}
\pie X,T_l\pid\pie Y,T_l\pid.
\]
Then the second fundamental form of $F$ can be
expressed by
\begin{eqnarray*}
\alpha_F(X,Y) &=& i_\ast\alpha_f(X,Y)-\left(\pie X,Y\pid-
\sum_{j=n+1}^{n+k+2}\pie X,T_j\pid\pie Y,T_j\pid\right)N_1 \\
&&-\left(\pie X,Y\pid-\sum_{l=1}^{n+1}
\pie X,T_l\pid\pie Y,T_l\pid\right)N_2.
\end{eqnarray*}
Taking traces and using \eqref{eq:laplacianH} yields
\[
\Delta F=mH^F=mi_\ast H^f-
\left(m-\sum_{j=n+1}^{n+k+2}\|T_j\|^2\right)N_1
-\left(m-\sum_{l=1}^{n+1}\|T_l\|^2\right)N_2
\]
and the conclusion follows.
\end{proof}

\begin{rem}
Observe that an isometric immersion $f:M^m\to\Sn\times\Sk$
can be seen as an isometric immersion 
$\til f=\iota\circ f:M^m\to\Sp_\kappa^{n+k+1}$ 
into the sphere with constant sectional curvature $\kappa=1/2$,
where $\iota:\Sn\times\Sk\to\Sp_\kappa^{n+k+1}$ denotes
the canonical inclusion.
\end{rem}

The next result states that any isometric immersion of a
Riemannian manifold $M^m$ into the sphere $\Sp_\kappa^{N-1}$
with constant sectional curvature $\kappa=1/2$, whose 
Laplacian of coordinate functions satisfies a condition as in 
\eqref{eq:Laplacegeral}, arises for a minimal isometric immersion 
of $M^m$ into a product of spheres $\Sn\times\Sk\subset\R^N$.

\begin{theorem}\label{teo:takatorus}
Let $F:M^m\to\Sp_\kappa^{N-1}$ be an isometric immersion.
Fixed a choice of two integers $n$ and $k$, with $N=n+k+2$,
let $E_1,\ldots,E_N$ be a parallel orthonormal frame in $\R^N$ 
as in \eqref{eq:fieldnormalF} such that
\[
\Delta\til F=-\left(m-\sum_{j=n+2}^{n+k+2}\|T_j\|^2\right)N_1
-\left(m-\sum_{l=1}^{n+1}\|T_l\|^2\right)N_2,
\]
where $\til F=h\circ F$, $h:\Sp_\kappa^{N-1}\to\R^N$ is the
umbilical inclusion, $T_i$ denotes
the orthogonal projection of $E_i$ onto $TM$ and $N_1$ and
$N_2$ as in \eqref{eq:fieldsN1N2}. Then 
there exists a minimal isometric immersion $f:M^m\to\Sn\times\Sk$
such that $F=i\circ f$.
\end{theorem}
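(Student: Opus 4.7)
The strategy mirrors the proof of Theorem~\ref{teo:takacilindro}, applied to $\tilde F = h \circ F$ viewed as a map into $\mathbb{R}^N$. First I would record three basic ingredients. Since $\tilde F$ lies on the sphere of curvature $\kappa = 1/2$, one has $\|\tilde F\|^2 = 2$, so $\tilde F$ is normal to $\tilde F_*TM$. The orthogonal decomposition $\tilde F = N_1 + N_2$ with $N_j = \pi_j \tilde F$ gives $\|N_1\|^2 + \|N_2\|^2 = 2$ and $\langle N_1, N_2\rangle = 0$. Finally, the general identity $\sum_{i=1}^{N}\|T_i\|^2 = m$, expressed in terms of the coefficients $\alpha = m - \sum_{j=n+2}^{n+k+2}\|T_j\|^2$ and $\beta = m - \sum_{l=1}^{n+1}\|T_l\|^2$ appearing in the hypothesis, reads $\alpha + \beta = m$.

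Next I would extract two consequences from the Laplace hypothesis. Rewriting it via $N_1 = \tilde F - N_2$ produces $\Delta\tilde F = -\alpha\tilde F + (\alpha - \beta)N_2$; since $\Delta\tilde F$ (proportional to the mean curvature) and $\tilde F$ are both normal to $\tilde F_*TM$, it follows that $(\alpha - \beta)N_2$ and, symmetrically, $(\alpha - \beta)N_1$ are normal to the immersion. On the other hand, $\|\tilde F\|^2$ is constant, so $\Delta\|\tilde F\|^2 = 0$; comparing with the general formula $\Delta\|\tilde F\|^2 = 2(\langle \Delta\tilde F,\tilde F\rangle + m)$ and substituting the hypothesis yields $\alpha\|N_1\|^2 + \beta\|N_2\|^2 = m$. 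Combined with $\alpha + \beta = m$ and $\|N_1\|^2 + \|N_2\|^2 = 2$, a short algebraic manipulation gives the pointwise identity $(\alpha - \beta)(\|N_1\|^2 - 1) = 0$.

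At each point of $M$ therefore either $\alpha = \beta$ or $\|N_1\|^2 = 1$. On the open set where $\alpha \neq \beta$ I obtain directly $\|N_1\|^2 = \|N_2\|^2 = 1$. The hard part will be handling the (possibly nonempty) set where $\alpha \equiv \beta$: there the hypothesis collapses to the Takahashi equation $\Delta\tilde F = -(m/2)\tilde F$ together with the degeneracy condition $\sum_{l=1}^{n+1}\|T_l\|^2 \equiv m/2$, and I would argue, using connectedness of $M$ and the elliptic equation $\Delta u + mu = 0$ satisfied by $u = \|N_1\|^2 - 1$ (a consequence of the direct computation $\Delta\|N_1\|^2 = 2\alpha(1 - \|N_1\|^2)$ once $\alpha = m/2$), that $\|N_1\|^2 \equiv 1$ throughout $M$. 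Once this is secured, $\tilde F(M) \subset \mathbb{S}^n \times \mathbb{S}^k$, so I define $f:M \to \mathbb{S}^n\times\mathbb{S}^k$ via $\tilde F = i \circ f$, and the minimality of $f$ follows immediately from Proposition~\ref{prop:SnSk}, since the Laplace hypothesis is precisely the characterization~\eqref{eq:Laplacegeral} of minimality.
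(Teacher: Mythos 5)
Your main line of argument is exactly the paper's: rewrite the hypothesis as $\Delta\til F=-\alpha\til F+(\alpha-\beta)N_2$ to extract normality of $N_1,N_2$, then use $\Delta\|\til F\|^2=0$ to pin down the norms. Your algebra is in fact cleaner --- the pointwise identity $(\alpha-\beta)\left(\|N_1\|^2-1\right)=0$ gives $\|N_1\|^2=\|N_2\|^2=1$ wherever $\alpha\neq\beta$ without first having to prove that these norms are constant --- and you are more honest than the paper in observing that the dichotomy is pointwise rather than global. On the set where $\alpha\neq\beta$ your argument is complete and correct.

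The degenerate set, however, is a genuine gap, and the elliptic argument you sketch cannot close it. The identity $\Delta u+2\alpha u=0$ for $u=\|N_1\|^2-1$ is correct (indeed it holds everywhere, not only where $\alpha=m/2$, since $\Delta\|N_1\|^2=2\pie\Delta\til F,N_1\pid+2\sum_{l=1}^{n+1}\|T_l\|^2$ and $\sum_{l=1}^{n+1}\|T_l\|^2=\alpha$ by \eqref{eq:sumTi}); if $\{\alpha\neq\beta\}$ were a nonempty open set you could propagate $u\equiv0$ by unique continuation. But when $\alpha\equiv\beta\equiv m/2$ on all of $M$, the equation $\Delta u+mu=0$ admits nonzero solutions compatible with every hypothesis of the theorem. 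Concretely, take $\til F(s,t)=(\cos s,\sin s,\cos t,\sin t)$ (the Clifford torus, minimal in $\Sp^{3}_{\kappa}$, $m=2$, $n=k=1$) and the rotated frame $E_1=\tfrac{1}{\sqrt2}(1,0,1,0)$, $E_2=\tfrac{1}{\sqrt2}(0,1,0,1)$, $E_3=\tfrac{1}{\sqrt2}(1,0,-1,0)$, $E_4=\tfrac{1}{\sqrt2}(0,1,0,-1)$. One checks $\sum_{l=1}^{2}\|T_l\|^2=\sum_{j=3}^{4}\|T_j\|^2=1$, so the displayed hypothesis reduces to $\Delta\til F=-\til F$, which holds; yet $\|N_1\|^2=1+\cos(s-t)$, so $u=\cos(s-t)$ is a genuine eigenfunction and $F(M)$ is not contained in $\Sp^1\times\Sp^1$ for the splitting determined by this frame. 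Thus on the fully degenerate set no argument will produce $\|N_1\|^2\equiv1$ relative to the given frame, and the conclusion as stated fails there. You should know, though, that the paper's own proof has exactly the same hole: in that case it merely invokes Takahashi's theorem to conclude that $F$ is minimal in $\Sp_\kappa^{N-1}$, which is not the conclusion of the theorem.
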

\begin{proof}
We first prove that $N_1$ and $N_2$ are normal to $F$. In fact, 
in terms of an orthonormal frame $\{X_1,\ldots,X_m\}$ of $TM$, 
we have
\begin{eqnarray}\label{eq:sumTi}
\sum_{i=1}^N\|T_i\|^2=\sum_{l=1}^{n+1}\|T_l\|^2 +
\sum_{j=n+2}^N\|T_j\|^2=m.
\end{eqnarray}
Then, as $\til F=N_1+N_2$, we can write:
\begin{eqnarray}\label{eq:LaplacetilF}
\begin{aligned}
\Delta\til F =& -\left(\sum_{l=1}^{n+1}\|T_l\|^2\right)N_1 -
\left(\sum_{j=n+2}^N\|T_j\|^2\right)N_2 \\
=& -\left(\sum_{l=1}^{n+1}\|T_l\|^2\right)\til F +
\left(\sum_{l=1}^{n+1}\|T_l\|^2-\sum_{j=n+2}^N\|T_j\|^2\right)N_2.
\end{aligned}
\end{eqnarray}
If 
\[
\sum_{l=1}^{n+1}\|T_l\|^2=\sum_{j=n+2}^N\|T_j\|^2,
\]
we have, by using \eqref{eq:sumTi}, that
\[
\Delta\til F=-\frac{1}{2}m\til F.
\]
Thus, it follows from Theorem \ref{teo:takahashi} that 
$F:M^m\to\Sp_\kappa^{N-1}$ is a minimal isometric immersion.
Suppose from now on that 
\begin{eqnarray}\label{eq:difterms}
\sum_{l=1}^{n+1}\|T_l\|^2\neq\sum_{j=n+2}^N\|T_j\|^2.
\end{eqnarray}
As $\Delta\til F=mH$ and $\til F$ is normal to $M$, we conclude
from \eqref{eq:LaplacetilF} that $N_2$ is normal to $M$. Similary 
we obtain that $N_1$ is normal to $M$. Now, for $n+2\leq j\leq n+k+2$, 
we have
\begin{eqnarray*}
\pie N_1,E_j\pid &=& \left\pie\til F-\sum_{i=n+2}^N
\pie\til F,E_i\pid E_i,E_j\right\pid \\
&=& \pie\til F,E_j\pid-\pie\til F,E_j\pid=0.
\end{eqnarray*}
Hence, for any $X\in TM$ we have
\[
X\pie N_1,N_1\pid=2\left\pie\til F_\ast X-\sum_{j=n+2}^N
\pie\til F_\ast X,E_j\pid E_j,N_1\right\pid=0,
\]
and it follows that $\pie N_1,N_1\pid=r^2$ for some constant $r$.
The same argument gives $\pie N_2,N_2\pid=s^2$ for some
constant $s$. Since $\til F=N_1+N_2$ and 
$\Delta\|\til F\|^2=2(\pie\Delta\til F,\til F\pid+m)$, we have
\begin{eqnarray*}
0 &=& \frac{1}{2}\Delta\|\til F\|^2=\pie\Delta\til F,\til F\pid+m \\
&=& -\left(m-\sum_{j=n+2}^N\|T_j\|^2\right)r^2
-\left(m-\sum_{l=1}^{n+1}\|T_l\|^2\right)s^2+m.  \\
&=& -\left(\sum_{l=1}^{n+1}\|T_l\|^2\right)r^2 - 
\left(\sum_{j=n+2}^N\|T_j\|^2\right)s^2+m.
\end{eqnarray*}
Since $r^2+s^2=2$, we can rewrite the above equation as
\begin{eqnarray*}
\left(\sum_{l=1}^{n+1}\|T_l\|^2-\sum_{j=n+2}^N\|T_j\|^2\right)s^2
&=& 2\sum_{l=1}^{n+1}\|T_l\|^2-m \\
&=& \sum_{l=1}^{n+1}\|T_l\|^2-\sum_{j=n+2}^{n+k+2}\|T_j\|^2.
\end{eqnarray*}
As we are assuming \eqref{eq:difterms}, 
we obtain $s^2=1$, and therefore $r^2=1$. We conclude that 
there exists an isometric immersion $f:M^m\to\Sn\times\Sk$ such that
$F=i\circ f$, and minimality of $f$ follows from Proposition
\ref{prop:SnSk}.

\end{proof}

\vspace{1cm}

\begin{small}
\begin{tabular}{lcr}

\begin{tabular}{l}
Fernando Manfio\\
ICMC, Universidade de S\~ao Paulo\\
S\~ao Carlos-SP, 13561-060 \\
Brazil\\
\verb+manfio@icmc.usp.br+\\
\end{tabular}\
&\quad&
\begin{tabular}{l}
Feliciano Vit\'orio\\
IM, Universidade Federal de Alagoas\\
Macei\'o-AL, 57072-900 \\
Brazil\\
\verb+feliciano@pos.mat.ufal.br+\\
\end{tabular}
\end{tabular}
\end{small}

\end{document}